\newcommand{\mylabel}[2]{#2\def\@currentlabel{#2}\label{#1}}
\newcommand{\W}{\mathcal W}
\newcommand{\R}{\mathbb R}
\newcommand{\Pc}{\mathcal P}
\def\E{{\mathbb E}}
\def\R{{\mathbb R}}
\newtheorem{theorem}{Theorem}
\theoremstyle{remark}
\begin{document}

\begin{abstract}
	We provide a short proof of the intriguing characterisation of the convex order given by Wiesel and Zhang \cite{WiZh22}.\\[0.1cm]
	\noindent\emph{Keywords:} Wasserstein distance, convex order
\end{abstract}

\author{Beatrice~Acciaio}
\author{Gudmund~Pammer}
\thanks{ETH Zurich, Switzerland,
\href{mailto:beatrice.acciaio@math.ethz.ch}{beatrice.acciaio@math.ethz.ch}, \href{mailto:gudmund.pammer@math.ethz.ch}{gudmund.pammer@math.ethz.ch}}
\title{A short proof of the characterisation of convex order using the 2-Wasserstein distance}

\maketitle

\section{Statement and proof}
We consider the set $\mathcal P_2(\R^d)$ of probabilities with finite second moment on the Euclidean space $(\R^d, |\cdot|)$.
The Wasserstein-2 distance $\mathcal W_2$ on $\mathcal P_2(\R^d)$ is given by
\begin{equation}
	\label{eq:W2}
	\textstyle
	\mathcal W_2^2(\mu,\nu) := \inf_{\pi \in \Pi(\mu,\nu)} \int |x - y|^2 \, \pi(dx,dy),
\end{equation}
where $\Pi(\mu,\nu)$ denotes the set of all probabilities on $\R^d \times \R^d$ with marginals $\mu$ and $\nu$.
Recall that the infimum in \eqref{eq:W2} is attained, see \cite[Theorem~4.1]{Vi09}, and write $\Pi^\ast(\mu,\nu)$ to denote the corresponding set of optimizers.

\begin{theorem}
	Let $\mu, \nu \in \mathcal P_2(\R^d)$.
	Then the following are equivalent:
	\begin{enumerate}[label=(\roman*)]
		\item \label{it:thm_1}
		$\mu \le_c \nu$, that is, $\int f(x) \, \mu(dx) \le \int f(x) \, \nu(dx)$\; for all convex $f \colon \R^d \to \R$;
		\item \label{it:thm_2}
		$\W_2^2(\nu,\rho) - \W_2^2(\mu,\rho) \le \int |x|^2 \, \nu(dx) - \int |x|^2 \, \mu(dx)$\; for all $\rho \in \Pc_2(\R^d)$.
	\end{enumerate}
\end{theorem}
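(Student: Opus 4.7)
The plan is to reformulate condition (ii) as a simple comparison of bilinear transport values. Setting
\[
K(\mu,\rho) := \sup_{\pi \in \Pi(\mu,\rho)} \int \langle x, y\rangle\, \pi(dx,dy),
\]
and expanding $|x-y|^2 = |x|^2 - 2\langle x,y\rangle + |y|^2$ in \eqref{eq:W2} gives
\[
\W_2^2(\mu,\rho) = \int |x|^2\, \mu(dx) + \int |y|^2\, \rho(dy) - 2K(\mu,\rho),
\]
so that (ii) is equivalent to
\[
K(\mu,\rho) \le K(\nu,\rho) \qquad \text{for all } \rho \in \Pc_2(\R^d). \qquad (\star)
\]
The one nontrivial optimal-transport input I would invoke is Kantorovich's strong duality for the bilinear cost, which here reads
\[
K(\mu,\rho) = \inf_{\varphi}\Big[\int \varphi\, d\mu + \int \varphi^*\, d\rho\Big],
\]
with infimum over convex $\varphi:\R^d\to\R$ and $\varphi^*$ the Legendre--Fenchel transform. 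Each direction of the theorem then reduces to a one-line comparison.

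For (i) $\Rightarrow$ (ii): by (i), $\int \varphi\, d\mu \le \int \varphi\, d\nu$ for every convex $\varphi$, hence termwise
\[
\int \varphi\, d\mu + \int \varphi^* d\rho \;\le\; \int \varphi\, d\nu + \int \varphi^* d\rho;
\]
taking the infimum over $\varphi$ on both sides and applying Kantorovich duality yields $(\star)$.

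For (ii) $\Rightarrow$ (i): I would feed $(\star)$ a carefully chosen $\rho$. For convex \emph{Lipschitz} $f:\R^d\to\R$, let $T$ be a Borel selection of $\partial f$ (which exists and is bounded by the Lipschitz constant), and set $\rho := T_\sharp \mu \in \Pc_2(\R^d)$. Testing $K(\mu,\rho)$ against the coupling $(\mathrm{id},T)_\sharp\mu$ gives
\[
K(\mu,\rho) \;\ge\; \int \langle x, T(x)\rangle\, \mu(dx);
\]
choosing $\varphi=f$ in the Fenchel--Young bound, and using $f^*(T(x)) = \langle x,T(x)\rangle - f(x)$ since $T(x)\in\partial f(x)$, gives
\[
K(\nu,\rho) \;\le\; \int f\, d\nu + \int \langle x, T(x)\rangle\, \mu(dx) - \int f\, d\mu.
\]
Plugging these into $(\star)$ and cancelling the common $\int \langle x, T(x)\rangle\, \mu(dx)$ leaves $\int f\, d\mu \le \int f\, d\nu$. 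To remove the Lipschitz assumption, I would write an arbitrary convex $f:\R^d\to\R$ as the pointwise monotone limit $f_n \uparrow f$ with $f_n := \max_{i\le n}\ell_i$ and $\{\ell_i\}$ a countable family of affine minorants of $f$; each $f_n$ is convex and Lipschitz, $\ell_1$ serves as an integrable lower bound, and monotone convergence transfers the inequality to $f$.

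The main obstacle is keeping the optimal-transport input minimal: the only genuinely non-elementary step is invoking Kantorovich strong duality for the bilinear cost $-\langle x,y\rangle$, which is classical and follows from the standard $\W_2$-dual. The remaining ingredients---Fenchel--Young, measurable selection from $\partial f$ (e.g.\ Kuratowski--Ryll-Nardzewski), and the affine-sup approximation of convex functions---are all standard convex analysis.
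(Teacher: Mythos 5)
Your argument is correct, and the reduction of \ref{it:thm_2} to the monotonicity $K(\mu,\rho)\le K(\nu,\rho)$ of the maximal-correlation functional is a clean way to organise the proof. The comparison with the paper splits by direction. For \ref{it:thm_2}$\implies$\ref{it:thm_1} you are doing essentially what the paper does: the test measure is the same push-forward $\rho=(\nabla f)_\#\mu$, and your pair of bounds $K(\mu,\rho)\ge\int\langle x,T(x)\rangle\,d\mu$ (test coupling) and $K(\nu,\rho)\le\int f\,d\nu+\int f^*\,d\rho$ (Fenchel--Young, i.e.\ weak duality) is exactly the content of the paper's appeal to Brenier's theorem, optimality of $\pi^\ast$, and the subdifferential inequality, written in dual rather than primal language. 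For \ref{it:thm_1}$\implies$\ref{it:thm_2} the routes genuinely differ: the paper invokes Strassen's theorem to produce a martingale coupling of $\mu$ and $\nu$, glues it to an optimal coupling of $\mu$ and $\rho$ via a conditionally independent product, and concludes from the martingale Pythagoras identity, whereas you use strong Kantorovich duality in Knott--Smith form, where the dual potentials form a convex conjugate pair, so that the convex order can be applied termwise to the potential. Your version has the advantage that a single optimal-transport input (quadratic duality) drives both directions and Strassen's theorem is not needed at all; the price is a small amount of care with the dual class: the near-optimal potential $\varphi$ for $K(\nu,\rho)$ is in general only a proper lower semicontinuous convex function with values in $(-\infty,+\infty]$, so applying \ref{it:thm_1} to it requires the same monotone approximation by finite-valued convex functions (maxima of affine minorants) that you already use at the end, together with the observation that $\int\varphi\,d\mu$ and $\int\varphi\,d\nu$ are well defined in $(-\infty,+\infty]$ because $\varphi$ is bounded below by an affine function and both measures have finite second moment. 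With those two remarks made explicit, the proof is complete.
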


\begin{proof}
	The implication \ref{it:thm_1}$\implies$\ref{it:thm_2} was already shown in \cite[Equation 2.2]{AlJo20}:
	Fix $\rho \in \Pc_2(\R^d)$, let $\pi^\ast \in \Pi^\ast(\mu,\rho)$, and let $\pi^M$ be a martingale coupling between $\mu$ and $\nu$ (which exists by \cite[Theorem~8]{St65}).
	By virtue of the conditionally independent product (see \cite[Definition~2.8]{Ed19}), there exists a probability space with random variables $X,Y,Z$ such that $(X,Z) \sim \pi^\ast$, and $(X,Y) \sim \pi^M$ is a one-step martingale in the filtration $(\sigma(X,Z), \sigma(X,Y,Z))$. Then
	\begin{align*}
		\W_2^2(\nu,\rho) &\le \E[|Y-Z|^2] = \E[|X - Y|^2 + |X-Z|^2]
		\\
		&= \W_2^2(\mu,\rho) + \E[|Y|^2 - |X|^2] = \W_2^2(\mu,\rho) + 
		\textstyle
		\int |y|^2 \, \nu(dy) - \int |x|^2 \, \mu(dx).
	\end{align*}

	To see the reverse direction, \ref{it:thm_2}$\implies$\ref{it:thm_1}, first observe that it suffices to test the convex ordering with convex functions $f\colon \R^d \to \R$ with at most linear growth.
	Denote by $\nabla f(x)$ a measurable selection of $\partial f(x)$, the subdifferential of $f$ at $x$, and set $\rho := (\nabla f)_\# \mu$.
	Next, let $\pi^\ast \in \Pi^\ast(\rho,\nu)$ and consider a probability space with random variables $(X,Y)$ such that $X \sim \mu$ and $(\nabla f(X),Y) \sim \pi^\ast$ (which again exists by the same argument as above).
	By Brenier's theorem, optimality of $\pi^\ast$, and by definition of the subdifferential, it follows from \ref{it:thm_2} that
	\begin{align*}
		0&\le \tfrac12 \left( \W_2^2(\mu,\rho) - \W_2^2(\nu,\rho) - \E[|X|^2 - |Y|^2] \right)
		\\
		&=
		\tfrac12\E[|X - \nabla f(X)|^2 - |Y - \nabla f(X)|^2 - |X|^2 + |Y|^2]
		\\
		&= \E[\nabla f(X) \cdot (Y - X)] 
		\textstyle
		\le \E[f(Y) - f(X)] = \int f(y) \, \nu(dy) - \int f(x) \, \mu(dx).\qedhere
	\end{align*}
\end{proof}

%-----------------------   bibliography ---------------------------------------

\bibliographystyle{abbrv}
\bibliography{../MBjointbib/joint_biblio}
\end{document}